\let\OLDthebibliography\thebibliography
\renewcommand\thebibliography[1]{
  \OLDthebibliography{#1}
  \setlength{\parskip}{0pt}
  \setlength{\itemsep}{0pt plus 0.3ex}
}
\newcommand{\N}{\mathbb{N}}
\newtheorem{theorem}{Theorem}
\newtheorem{corollary}[theorem]{Corollary}
\newtheorem{conjecture}[theorem]{Conjecture}
\newtheorem{lemma}[theorem]{Lemma}
\newtheorem{proposition}[theorem]{Proposition}
\title{Dense induced subgraphs of dense bipartite graphs}
\author{Rose McCarty\footnote{Department of Combinatorics and Optimization, University of Waterloo. E-mail: \href{mailto:rose.mccarty@uwaterloo.ca}{\texttt{rose.mccarty@uwaterloo.ca}} }}
\begin{document}
\maketitle

\begin{abstract}
We prove that every bipartite graph of sufficiently large average degree has either a $K_{t,t}$-subgraph or an induced subgraph of average degree at least $t$ and girth at least $6$. We conjecture that ``$6$'' can be replaced by ``$k$'', which strengthens a conjecture of Thomassen. In support of this conjecture, we show that it holds for regular graphs.
\end{abstract}

\section{Introduction}

We prove the following.

\begin{restatable}{theorem}{main}
\label{thm:main}
There is a function $f_{\ref{thm:main}}: \mathbb{N} \rightarrow \mathbb{N}$ so that for every $t\in\N$, every bipartite graph with average degree at least $f_{\ref{thm:main}}(t)$ has either a $K_{t,t}$-subgraph or an induced subgraph of average degree at least $t$ and girth at least $6$.
\end{restatable}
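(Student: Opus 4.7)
The plan is to use a probabilistic sampling-and-deletion argument, guided by the codegree information from the $K_{t,t}$-free hypothesis.

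First, I would preprocess $G$. Iteratively removing vertices of degree less than $f_{\ref{thm:main}}(t)/2$ yields an induced subgraph $G_1$ with minimum degree at least $f_{\ref{thm:main}}(t)/2$, which is still bipartite and $K_{t,t}$-free. A dyadic partition of the vertex set by degree class then lets me extract an induced subgraph $G'$ in which all degrees lie in a common interval $[D, 2D]$; see the discussion of the main obstacle below for the size of $D$ relative to $f_{\ref{thm:main}}(t)$. The key structural input is that, since $G'$ is bipartite and $K_{t,t}$-free, every pair of same-side vertices has codegree at most $t-1$, and therefore the number of $4$-cycles in $G'$ is
\[
c_4(G') \;=\; \sum_{\{u,v\}}\binom{c_{uv}}{2} \;\le\; \tfrac{t-1}{2}\cdot P_2(G') \;\le\; O\bigl(t\,D^{2}\,|V(G')|\bigr),
\]
using the bound $P_2(G') \le O(D\,|E(G')|)$ given by the max-degree cap.

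I would then sample each vertex of $G'$ independently with probability $p = 2t/D$. By linearity of expectation, the sampled induced subgraph has $\Theta(t^2|V(G')|/D)$ edges, $\Theta(t|V(G')|/D)$ vertices, and at most $O(t^5|V(G')|/D^2)$ four-cycles, while each sampled vertex has expected degree $\Theta(t)$. Deleting one vertex per surviving $4$-cycle then loses $O(t^6|V(G')|/D^2)$ edges, which for $D \ge C t^4$ with a sufficiently large constant $C$ is much smaller than the expected edge count. A standard concentration argument (Chernoff/Chebyshev) shows this picture holds with positive probability, so the resulting induced subgraph is $C_4$-free with average degree at least $t$; since $G'$ is bipartite, its girth is automatically at least $6$.

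The main obstacle is the regularization step: the standard dyadic argument loses factors polylogarithmic in $|V(G)|$, and $|V(G)|$ cannot be bounded in terms of $t$ and $f_{\ref{thm:main}}(t)$ alone---the Kővári--Sós--Turán theorem provides only a lower bound on $|V(G)|$ in terms of the edge density, never an upper bound. Addressing this cleanly requires either a sharper (polylog-free) bipartite regularization lemma, or a revised sampling argument that tolerates an unbounded maximum degree by using the weaker but unconditional codegree bound $c_4(G) \le O(t^{2}|V(G)|^{2})$, together with a careful balancing of the $4$-cycle count against the deletion cost so as to keep $f_{\ref{thm:main}}(t)$ polynomial in $t$.
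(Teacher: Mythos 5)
Your sampling-and-deletion argument is essentially the paper's proof of Theorem~\ref{thm:reg}, and it is sound \emph{once the graph is approximately regular}: the codegree bound from $K_{t,t}$-freeness controls the $4$-cycle count, and the deletion cost is then negligible. But the reduction to the approximately regular case --- which you flag as ``the main obstacle'' --- is not a technicality to be patched; it is the entire difficulty of Theorem~\ref{thm:main}, and neither of your proposed fixes can work. A polylog-free regularization lemma of the kind you want does not exist: as discussed in Section~\ref{sec:conc}, Dellamonica, Koubek, Martin, and R\"{o}dl (building on Pyber, R\"{o}dl, and Szemer\'{e}di) constructed graphs of arbitrarily large average degree with no subgraph (let alone induced subgraph) of minimum degree at least $4$ and maximum degree bounded by any fixed $\Delta$, so one cannot pass to a subgraph whose maximum degree is controlled by a function of the average degree. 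The fallback of running the sampling argument with the unconditional bound $c_4 \le O(t^2|V(G)|^2)$ also fails: with $n \gg d$ this count overwhelms the $\Theta(nd)$ edges, and no choice of $p$ simultaneously keeps the deletion cost small and the sampled average degree at least $t$ (moreover, when a deleted vertex may have unbounded degree, the edges lost per deleted $4$-cycle are themselves uncontrolled).

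The paper takes a genuinely different route that avoids two-sided regularization altogether. Lemma~\ref{lem:KO} (K\"{u}hn--Osthus) regularizes only one side $A$ up to a factor $16$ while making $|A|$ enormously larger than $|B|$; Lemma~\ref{lem:main} then refines this by random sparsification of $B$ so that $A$ becomes \emph{exactly} $r$-regular with $|A| \ge \lambda|B|$ (so the average degree of the induced subgraph is governed by $r$ alone, with no constraint on the degrees in $B$). From there, instead of deleting short cycles, the proof applies F\"{u}redi's sunflower-type theorem (Theorem~\ref{thm:Furedi}) to obtain an $(R,t)$-partition of $B$: either many parts are pairwise non-neighbourly, giving directly an induced subgraph of girth at least $6$ and average degree at least $r$, or one finds a dense induced subgraph whose $A$-side lies inside the neighbourhood of a single vertex of $B$; iterating the latter outcome $t$ times produces a $K_{t,t}$. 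If you want to pursue your approach, the honest conclusion is that it proves the approximately regular case (Theorem~\ref{thm:reg}) --- which in the paper actually yields the stronger girth-$2k$ conclusion --- but a different mechanism is needed for the general statement.
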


This refines the recent result of Kwan, Letzter, Sudakov, and Tran~\cite{KLST20} that every graph of sufficiently large average degree has either a $K_t$-subgraph or an induced, bipartite subgraph of average degree at least $t$. This resolved a conjecture of Esperet, Kang, and Thomass\'{e}~\cite{EKT19} and a question of K\"{u}hn and Osthus~\cite{KO04induced}. We conjecture that this can be pushed further, and that every bipartite graph of sufficiently large average degree has either a $K_{t,t}$-subgraph or an induced subgraph of average degree at least $t$ and girth at least $k$. This strengthens a conjecture of Thomassen~\cite{Thomassen83}. In this direction, we prove the following.

\begin{restatable}{theorem}{reg}
\label{thm:reg}
There is a function $f_{\ref{thm:reg}}:\N^3 \rightarrow \N$ so that for all $t,k,\lambda \in \N$, every bipartite graph with average degree $d \geq f_{\ref{thm:reg}}(t, k, \lambda)$ and maximum degree at most $\lambda d$ has either a $K_{t,t}$-subgraph or an induced subgraph of average degree at least $t$ and girth at least $k$.
\end{restatable}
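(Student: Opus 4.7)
The approach is probabilistic deletion, exploiting the near-regularity hypothesis $\Delta(G) \leq \lambda d$. Form a random induced subgraph $G[S]$ by including each vertex of $G$ in $S$ independently with probability $p = \alpha \cdot t/d$, for a constant $\alpha = \alpha(k, \lambda)$ to be tuned. By linearity, $\mathbb{E}[|S|] = pn$ and $\mathbb{E}[|E(G[S])|] = p^2 nd/2$, so the expected average degree of $G[S]$ is $pd = \alpha t$, comfortably above the target. The plan is to show that $G[S]$ typically contains so few short cycles that deleting one vertex per short cycle leaves an induced subgraph of average degree at least $t$ and girth at least $k$.

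The technical heart is a cycle-counting lemma: for a bipartite $K_{t,t}$-free graph with $n$ vertices, average degree $d$, and maximum degree $\leq \lambda d$, the number $N_{2\ell}(G)$ of $C_{2\ell}$'s in $G$ satisfies
\[
N_{2\ell}(G) \leq F(t, \ell, \lambda) \cdot n \cdot d^{2\ell - 2}
\]
for each $2 \leq \ell < k/2$. This saves a factor of $d$ over the naive estimate $N_{2\ell} \leq n(\lambda d)^{2\ell - 1}/(2\ell)$ that uses only the max-degree hypothesis. Given the lemma, the expected number of $C_{2\ell}$'s in $G[S]$ is at most $p^{2\ell} F nd^{2\ell - 2} = F(\alpha t)^{2\ell - 2} p^2 n$. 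Since each deletion costs at most $\lambda d$ edges and the expected edge count is $p^2 nd/2$, a routine calculation shows that for $d \geq f(t, k, \lambda)$ with $f$ an appropriate polynomial, deleting one vertex per short cycle (summing over all $\ell < k/2$) leaves an induced subgraph of average degree $\geq t$ and girth $\geq k$. A standard alteration or concentration argument then converts this expectation computation into an existence statement.

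The main obstacle is proving the cycle-counting lemma. The elementary max-degree bound is short by exactly one factor of $d$, and $K_{t,t}$-freeness must be leveraged to gain it back. For $\ell = 2$, one has $N_4 = \sum_{\{u,v\}} \binom{c(u,v)}{2}$ with codegrees $c(u,v) = |N(u) \cap N(v)|$; one then combines the first-moment bound $\sum c(u,v) = \sum_b \binom{\deg(b)}{2} \leq n(\lambda d)^2/2$ with Kővári--S\'{o}s--Tur\'{a}n-type moment bounds (e.g., $\sum_b \deg(b)^t \lesssim n^t$) via H\"{o}lder's inequality to control $\sum c(u,v)^2$. A subtlety is that $K_{t,t}$-freeness does not directly cap individual codegrees, so one may first need to pass to an induced subgraph in which most pairs have codegree $< t$, by deleting vertices incident to exceptionally high-codegree pairs. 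For $\ell \geq 3$, the analogous bounds on ``path-codegrees'' (numbers of length-$\ell$ walks between vertex pairs) extend the argument, either by induction on $\ell$ or via a spectral analysis of the biadjacency matrix, with $K_{t,t}$-freeness again entering through moment bounds on $t$-fold common neighborhoods.
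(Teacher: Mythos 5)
There is a genuine gap: the cycle-counting lemma at the heart of your argument is false for $t\ge 3$. Norm graphs are a counterexample already at $\ell=2$: they are $K_{t,t}$-free, essentially $d$-regular with $d=\Theta(n^{1-1/t})$, and a typical pair of vertices on one side has codegree $\Theta(d^2/n)=\Theta(n^{1-2/t})$, so the number of $C_4$'s is $\Theta\bigl(n^2\cdot(d^2/n)^2\bigr)=\Theta(d^4)$, which exceeds $nd^2$ by a factor of $d^2/n=n^{1-2/t}\to\infty$ once $t\ge 3$. The subtlety you flag is exactly the fatal one: $K_{t,t}$-freeness does not cap codegrees of pairs, and in these examples \emph{every} pair has codegree far above $t$, so no cleaning step recovers your bound. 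The saving that $K_{t,t}$-freeness actually buys is only a factor of $d^{1/t}$, not a factor of $d$: the paper's Corollary~\ref{cor:numCycles} shows each edge lies in $\mathcal{O}(d^{2\ell-2-1/t})$ cycles of length $2\ell$, by extending the edge to a path on $2\ell-3$ edges in at most $(\lambda d)^{2\ell-4}$ ways and then applying K\H{o}v\'{a}ri--S\'{o}s--Tur\'{a}n to the bipartite graph between the neighbourhoods of the two endpoints (at most $\lambda d$ vertices a side, no $K_{t,t}$, hence $\mathcal{O}(d^{2-1/t})$ edges).

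This weaker (and correct) bound breaks your parameter choice $p=\alpha t/d$: the expected number of cycles of length $2\ell$ in $G[S]$ becomes $\Theta\bigl(p^{2\ell}nd^{2\ell-1-1/t}\bigr)=\Theta\bigl(nd^{-1-1/t}\bigr)$ up to constants, and even charging only the expected degree $\approx p\lambda d$ per deleted vertex, the deletion cost exceeds the retained edge count $\Theta(n/d)$ by a factor of order $d^{1-1/t}$. The repair is to aim higher than average degree $\Theta(t)$: take $p=d^{-(1-\epsilon)}$ with $\epsilon=1/(2kt)$, so that $\mathbb{E}|E(G[S])|=\Theta(d^{\epsilon}pn)$ while $p^{2k}d^{2k-1/t}=1$ makes the expected deletion cost only $\mathcal{O}(pn)$; the surplus average degree $d^\epsilon$ then absorbs the loss. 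This is exactly how the paper's proof of Theorem~\ref{thm:reg} proceeds. Your overall architecture (random induced subgraph plus deletion of one vertex per short cycle, with KST supplying the improved cycle count) is the right one; only the strength of the cycle bound and the consequent choice of $p$ need to change.
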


These results extend prior work on (non-induced) subgraphs. K\"{u}hn and Osthus~\cite{KO04} proved that every bipartite graph with sufficiently large average degree has a subgraph of average degree at least $t$ and girth at least $6$. Dellamonica, Koubek, Martin, and R\"{o}dl~\cite{DKMR11} gave another proof using a theorem of F\"{u}redi~\cite{Furedi83} on uniform hypergraphs. Their proof almost works for induced subgraphs, and our main technical contribution is showing that we can find induced subgraphs where one side is regular and the other side is much smaller (Lemma~\ref{lem:main}). The girth $6$ case is currently the best known on the following beautiful conjecture of Thomassen.

\begin{conjecture}[Thomassen~\cite{Thomassen83}]
\label{conj:Th}
There is a function $f_{\ref{conj:Th}}:\mathbb{N}^2 \rightarrow \mathbb{N}$ so that for all $t,k \in \N$, every bipartite graph of average degree at least $f_{\ref{conj:Th}}(t,k)$ has a subgraph of average degree at least $t$ and girth at least $k$.
\end{conjecture}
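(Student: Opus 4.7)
I would attempt this by induction on $k$. Since bipartite graphs contain only even cycles, girth is always even, so it suffices to treat even values of $k$. The base case $k \leq 6$ is the theorem of Kühn and Osthus, strengthened in Theorem~\ref{thm:main} to yield an induced subgraph. For the inductive step, I would pass from girth $\geq k-2$ to girth $\geq k$: starting from a bipartite graph $G$ whose average degree is a sufficiently large function of $t$ and $k$, first apply the inductive hypothesis with a large target average degree $D \gg t$ to obtain a bipartite subgraph $H \subseteq G$ of girth at least $k-2$ and average degree at least $D$. The task reduces to destroying the cycles of length exactly $k-2$ in $H$ while retaining average degree at least $t$.

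My first attempt to finish the inductive step would be a random sparsification. After passing to a subgraph $H' \subseteq H$ of minimum degree at least $D/2$, let $N$ denote the number of $(k-2)$-cycles in $H'$, and retain each edge independently with probability $p = p(D)$. The expected edge count drops by a factor of $p$ while the expected number of surviving $(k-2)$-cycles drops by $p^{k-2}$. Choosing $p$ so that the surviving edges dominate $p^{k-2} N$ by a wide margin, and then deleting one edge per remaining cycle, would yield a bipartite graph of girth at least $k$ whose average degree still grows with $D$. A second option would be to bypass the cycle-counting by first producing an induced subgraph that is nearly regular and then invoking Theorem~\ref{thm:reg}: if every bipartite graph of average degree $d$ contains, for some slowly growing $\lambda = \lambda(d)$, an induced subgraph of average degree at least $d/\operatorname{polylog}(d)$ and maximum degree at most $\lambda$ times the average, then Theorem~\ref{thm:reg} immediately delivers the conclusion for all $k$.

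The main obstacle in the first plan is the cycle-counting step. In a bipartite graph of girth exactly $k-2$ and average degree $D$, the number of $(k-2)$-cycles per vertex can a priori be as large as $D^{k-2}$, and bounding it in terms of the girth alone is precisely where Thomassen's conjecture has resisted proof. The Kővári–Sós–Turán theorem and the Bondy–Simonovits even-cycle bound control the edge count given the girth, but do not directly control the number of $(k-2)$-cycles inside a fixed host graph of girth $k-2$. The main obstacle in the second plan is preserving the average-degree bound required by $f_{\ref{thm:reg}}$ during the reduction to nearly-regular subgraphs; iteratively deleting high-degree vertices on one side can destroy the average degree on the other side of the bipartition, and simultaneously controlling both sides appears delicate.

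Faced with these obstacles, a natural intermediate goal is to settle a single new value, such as $k = 8$. Here one needs to destroy only $6$-cycles inside a bipartite graph of girth at least $6$, and the stronger Kővári–Sós–Turán bound $|E| = O(n^{4/3})$ in such graphs should translate into a usable bound on the number of $6$-cycles and make the sparsification argument go through. Success at $k = 8$ would suggest that the iterative scheme can be pushed through all even $k$ by combining the corresponding bounds for $K_{s,s}$-free bipartite graphs with a careful choice of $p$ at each stage, and would isolate the true difficulty as converting girth-based edge bounds into cycle counts.
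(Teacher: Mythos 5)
This statement is Conjecture~\ref{conj:Th}, Thomassen's 1983 conjecture, and the paper does \emph{not} prove it. The paper establishes only the girth-$6$ case (Theorem~\ref{thm:main}) and the approximately regular case (Theorem~\ref{thm:reg}); the text explicitly notes that girth $6$ is ``currently the best known,'' so even $k = 8$ is open. Your write-up is honest about this and never claims a complete proof, but that means there is no argument here to check against the paper: you have sketched two research programmes and correctly flagged where each one stalls.

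The obstacles you name are the right ones, and one of them is addressed directly by the paper. For the first plan, your diagnosis is accurate: the girth hypothesis controls edge counts (Bondy--Simonovits, K\H{o}v\'ari--S\'os--Tur\'an) but not the number of short cycles through a fixed edge, and without such a bound the ``sparsify and delete one edge per short cycle'' step cannot be made to retain average degree. For the second plan, Section~\ref{sec:conc} of the paper cites Dellamonica, Koubek, Martin, and R\"odl (Theorem~17 of \cite{DKMR11}): for every $\Delta$ there are graphs of arbitrarily large average degree with no subgraph of minimum degree at least $4$ and maximum degree at most $\Delta$, and the authors conclude that one cannot reduce Thomassen's conjecture to graphs whose maximum degree is bounded by a function of the average degree. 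That is precisely the preliminary reduction your second plan relies on before invoking Theorem~\ref{thm:reg}, so this route is obstructed. One small factual slip: a bipartite graph of girth at least $6$ is $C_4$-free, and the relevant edge bound is $O(n^{3/2})$ (the $K_{2,2}$ case of K\H{o}v\'ari--S\'os--Tur\'an); the $O(n^{4/3})$ bound you quote is the Bondy--Simonovits estimate for $C_6$-free graphs, which is not the hypothesis you have. Either way, translating such edge bounds into a sufficiently strong count of $6$-cycles inside a $C_4$-free host with prescribed average degree is exactly the open step, and neither the paper nor the literature it cites supplies it.
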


Thomassen~\cite{Thomassen83} noted that this is false for induced subgraphs because of $K_{t,t}$. We conjecture this is the only obstruction.

\begin{conjecture}
\label{conj:new}
There is a function $f_{\ref{conj:new}}:\mathbb{N}^2 \rightarrow \mathbb{N}$ so that for all $t,k \in \N$, every bipartite graph of average degree at least $f_{\ref{conj:new}}(t,k)$ has either a $K_{t,t}$-subgraph or an induced subgraph of average degree at least $t$ and girth at least $k$.
\end{conjecture}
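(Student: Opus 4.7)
Since Conjecture~\ref{conj:new} strictly strengthens Thomassen's Conjecture~\ref{conj:Th}, a full proof must in particular resolve the non-induced problem, and so one should expect the argument to be at least as intricate as the known proofs of Thomassen's conjecture for small values of $k$. The most direct route through the tools developed in this paper is a reduction to the approximately-regular case handled by Theorem~\ref{thm:reg}: given a bipartite graph $G$ with parts $A$ and $B$ and very large average degree $d$, I would try to extract an induced subgraph $G'$ whose average degree is still at least $f_{\ref{thm:reg}}(t,k,\lambda)$ and whose maximum degree is at most $\lambda$ times its average, for some constant $\lambda = \lambda(t,k)$. Theorem~\ref{thm:reg} applied to $G'$ then immediately produces either a $K_{t,t}$-subgraph of $G$ or the desired high-girth induced subgraph of $G$.

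The extraction step has three substeps. First, iteratively delete vertices of degree below $d/2$ to reach minimum degree at least $d/2$. Second, partition $A$ and $B$ into dyadic degree classes $A_i, B_j$ according to $\lfloor \log_2 \deg(v) \rfloor$, and by pigeonhole find a pair $(i,j)$ such that $G[A_i \cup B_j]$ carries many edges; every vertex in this induced subgraph then has degree within a factor $2$ of a common value on its side. Third, by random subsampling on the denser side, balance the two sides so that the resulting induced subgraph is approximately regular with a universal constant $\lambda$.

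The main obstacle is quantitative: naive dyadic bucketing introduces a $\log^2 |V(G)|$ factor in the loss of average degree, whereas Conjecture~\ref{conj:new} asks for $f_{\ref{conj:new}}$ independent of $|V(G)|$. I see two possible routes to close this gap, neither straightforward. Route~A is to argue that if the degree sequence is genuinely spread over many dyadic classes, then some dense dyadic neighborhood must already contain a $K_{t,t}$ via a K\H{o}v\'ari--S\'os--Tur\'an-type count, so without loss of generality the non-negligible part of the degree sequence is concentrated on boundedly many classes. Route~B is to weaken the hypothesis of Theorem~\ref{thm:reg} from ``maximum degree at most $\lambda d$'' to a moment or tail condition on the degree distribution, which can be arranged by a single pruning step with no logarithmic loss; establishing such a strengthening requires reopening the proof of Theorem~\ref{thm:reg} to see where the max-degree bound is actually used, and I expect this to be the true crux of the problem.
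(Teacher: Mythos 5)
The statement you are addressing is Conjecture~\ref{conj:new}: it is an open conjecture, the paper offers no proof of it, and your proposal --- as you candidly note yourself --- is a research plan rather than a proof. But the gap is more fundamental than the quantitative $\log^2|V(G)|$ loss you flag. The reduction you propose, namely extracting from an arbitrary bipartite graph of large average degree an induced subgraph that is approximately regular with comparable average degree, is provably impossible without further hypotheses. As discussed in Section~\ref{sec:conc}, Dellamonica, Koubek, Martin, and R\"odl~\cite{DKMR11} construct, for every $\Delta \in \N$, graphs of arbitrarily large average degree with no subgraph of minimum degree at least $4$ and maximum degree at most $\Delta$; any $(D,\lambda)$-regular induced subgraph with $D \geq 8$ would yield such a subgraph after pruning vertices of degree below $D/2$. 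So no amount of care with the dyadic bucketing in your second and third substeps can make the extraction work unconditionally, and your Route~A --- showing that a genuinely spread-out degree sequence forces a $K_{t,t}$ --- is not an optional refinement but essentially the whole content of the problem. The obstruction constructions are random and based on Pyber--R\"odl--Szemer\'edi; it is not at all clear that they are dense enough locally for a K\H{o}v\'ari--S\'os--Tur\'an count to produce a $K_{t,t}$.

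For comparison, the paper's actual evidence for the conjecture comes from two non-overlapping partial results rather than a single reduction. The girth-$6$ case (Theorem~\ref{thm:main}) uses Lemma~\ref{lem:main}, which regularizes only \emph{one} side of the bipartition while making it much larger than the other; crucially it places no upper bound on the degrees of the small side, which is how it evades the impossibility result above. F\"uredi's theorem is then applied to that one-side-regular graph. The approximately regular case (Theorem~\ref{thm:reg}) is exactly the target of your reduction and is proved by random vertex sampling together with the K\H{o}v\'ari--S\'os--Tur\'an bound on the number of short cycles. Your Route~B (relaxing the maximum-degree hypothesis of Theorem~\ref{thm:reg} to a tail condition) is a reasonable direction, but the sampling argument there uses the maximum degree to bound the number of edges incident to short cycles, and in any case a tail condition strong enough to run that count would again be obstructed by the construction of~\cite{DKMR11} unless the $K_{t,t}$-free hypothesis is exploited in the extraction itself.
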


\noindent Conjecture~\ref{conj:new} would imply Conjecture~\ref{conj:Th} because the latter holds for regular graphs (apply the standard technique of including edges independently at random and deleting one edge from each short cycle). 

We hope that Conjecture~\ref{conj:new} can lead to new approaches to Thomassen's conjecture. In particular, both proofs of the girth $6$ case work by either finding the desired subgraph directly, or by finding a dense induced subgraph so that one side is contained in the neighbourhood of a vertex. If the latter occurs too many times, then a $K_{t,t}$-subgraph is obtained. Conjecture~\ref{conj:new} suggests that for higher girths no reduction step is needed, as we may already assume that the graph has girth at least $6$.

Following Ne\v{s}et\v{r}il, Ossona de Mendez, Rabinovich, and Siebertz~\cite{NORS19}, we say that a class of graphs is \textit{weakly sparse} if it excludes some clique and some complete bipartite graph as induced subgraphs. It is a general trend that structural properties regarding induced and non-induced subgraphs tend to coincide on weakly sparse classes. This can particularly be seen in work on induced subdivisions~\cite{KO04induced, D18} and width parameters~\cite{GW00, W19tw}. See the relevant sections of~\cite{NORS19} and~\cite{ss18survey} for summaries of results in this direction.

\section{Preliminaries}
In this section we introduce the tools needed for Theorem~\ref{thm:main} and outline its proof. In Section~\ref{sec:4-cycles} we prove Theorem~\ref{thm:main} (the girth~$6$ case) and in Section~\ref{sec:regular} we prove Theorem~\ref{thm:reg} (the approximately regular case). We conclude in Section~\ref{sec:conc} by discussing some difficulties with Thomassen’s Conjecture and proposing a weaker conjecture.

For a bipartite graph $G$ with bipartition $(A,B)$ and sets $A_1 \subseteq A$, $B_1 \subseteq B$, we write $G[A_1, B_1]$ for the induced subgraph of $G$ on vertex set $A_1 \cup B_1$. For a positive integer $r$, we say that \textit{$A_1$ is $r$-regular} if every vertex in $A_1$ has degree exactly $r$, and $A_1$ is non-empty. We write $\delta(A_1)$ and $\Delta(A_1)$ for the minimum and maximum degree of a vertex in $A_1$, respectively. If $A_1$ is empty we consider both to be zero.

Before outlining the proof of Theorem~\ref{thm:main}, we need to introduce a theorem of F\"{u}redi which is a Ramsey-type result on stars/sunflowers in uniform hypergraphs. For simplicity, we will state a slightly altered version in terms of bipartite graphs where one side is regular. 

Let $r,t\in \N$ and let $G$ be a bipartite graph with bipartition $(A,B)$ so that $A$ is $r$-regular. An \textit{$(r,t)$-partition of $(A,B)$} is a partition $B_1, \ldots, B_r$ of $B$ so that every vertex in $A$ has a neighbour in each of $B_1, \ldots, B_r$ and, for all $1 \leq i <j \leq r$, if there exist a vertex in $B_i$ and a vertex in $B_j$ with at least two common neighbours, then every vertex in $B_i$ and vertex in $B_j$ with some common neighbour in fact have at least $t$ common neighbours. In this case we say that \textit{$B_i$ and $B_j$ are neighbourly}. If such a partition exists we say that \textit{$(A,B)$ is $(r,t)$-partitionable}.

\begin{theorem}[F\"{u}redi~{\cite[Theorem~1']{Furedi83}}]
\label{thm:Furedi}
There is a function $c:\N^2 \rightarrow \mathbb{R}^+$ so that for all $r\geq t$ and every bipartite graph $G$ with bipartition $(A,B)$ so that $A$ is $r$-regular, either $G$ has a $K_{t,t}$-subgraph or there exists $A_1 \subseteq A$ with at least $c(r,t)|A|$ vertices so that $(A_1,B)$ is $(r,t)$-partitionable in $G[A_1, B]$.
\end{theorem}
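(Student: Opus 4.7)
I would first reinterpret the statement as a problem about $r$-uniform hypergraphs. Identify each $a \in A$ with its neighbourhood $N(a) \subseteq B$ to obtain an $r$-uniform multi-hypergraph $\mathcal{H}$ on vertex set $B$ with $|A|$ edges. Under this correspondence, a $K_{t,t}$-subgraph of $G$ is precisely $t$ edges of $\mathcal{H}$ sharing a common $t$-element subset, and an $(r,t)$-partition corresponds to an ordered partition $B = B_1 \cup \cdots \cup B_r$ that is transversal to each edge indexed by $A_1$ and satisfies, for every class-pair $(B_i, B_j)$, the following dichotomy: either every pair $(b_i,b_j) \in B_i \times B_j$ co-occurring in some such edge has codegree exactly $1$, or every such pair has codegree at least $t$.

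The approach would be Füredi's delta-system method, carried out in two stages. First, by iterated applications of the Erd\H{o}s--Ko--Rado sunflower lemma, I would pass to a subfamily $\mathcal{H}_1$ of size at least $c_1(r,t)|A|$ admitting a transversal partition $B_1, \ldots, B_r$: either we encounter along the way an $r$-uniform sunflower with $t$ petals whose core has size at least $t$ (immediately yielding $K_{t,t}$), or the intersection pattern becomes canonical enough that core elements can be placed one per class and the remaining (pairwise disjoint) ``petal'' elements distributed so that each edge meets each class in exactly one vertex.

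Second, I would enforce the codegree dichotomy one class-pair at a time. For a fixed pair $(B_i, B_j)$, examine the codegrees of co-occurring pairs in $B_i \times B_j$. If some co-occurring pair $(b_i,b_j)$ has codegree at least $t$, applying the sunflower lemma to its $(r-2)$-uniform link either produces $t$ edges sharing $\{b_i, b_j\}$ together with a common core of size at least $t-2$ (yielding $K_{t,t}$) or permits subsampling so that every co-occurring pair in $B_i \times B_j$ has codegree at least $t$. If instead every co-occurring codegree is strictly less than $t$, a separate random-subsampling step reduces all such codegrees to at most $1$. Iterating over the $\binom{r}{2}$ class-pairs costs only a further constant factor, and the final constant is roughly $c(r,t) = c_1(r,t)\cdot c_2(r,t)^{\binom{r}{2}} > 0$.

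The main obstacle is this second stage: refining the family to control one class-pair can perturb the codegrees for another, potentially reintroducing ``bad'' values in $[2, t-1]$ among pairs that had already been cleaned up. The fix is either to process class-pairs in a carefully chosen order so that later refinements cannot undo earlier ones (checking that once a pair of classes has codegrees uniformly in $\{0,1\}$ or uniformly $\geq t$, this property is preserved under deletion of edges), or to apply a simultaneous canonization that handles all class-pairs at once by indexing edges by their full intersection pattern and pigeonholing. This careful control of iterated refinement is the heart of Füredi's argument and is where I expect the bulk of the technical work to lie.
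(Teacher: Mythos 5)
First, a point of calibration: the paper does not prove this theorem at all. It is imported wholesale from F\"{u}redi's Theorem~1$'$ on uniform set systems, and the only argument supplied is the one-line translation in the paragraph following the statement --- pass from the multiset $\{N(a) : a \in A\}$ to the underlying set family $\mathcal{F}$, observing that either some neighbourhood repeats $t$ times (giving $K_{t,t}$, since $r \geq t$) or $|\mathcal{F}| \geq |A|/t$, so the constant only degrades by a factor of $t$. Your opening reduction to an $r$-uniform (multi-)hypergraph is exactly this translation, so that part matches the paper; everything after it is an attempt to reprove F\"{u}redi's theorem from scratch, which the paper deliberately avoids.

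As a reconstruction of F\"{u}redi's argument, your sketch has the right ingredients (sunflower lemma, a rainbow partition, a codegree dichotomy per class-pair) but a genuine gap at the step you yourself flag. The ``process class-pairs in a careful order'' fix does not work as stated: the property that every co-occurring pair in $B_i \times B_j$ has codegree at least $t$ is \emph{not} preserved under deletion of edges, so cleaning a later class-pair (which necessarily discards a constant fraction of the family) can push already-processed codegrees back into the forbidden range $[2, t-1]$, and there is no ordering of the $\binom{r}{2}$ pairs that avoids this. Your alternative --- ``simultaneous canonization by indexing edges by their full intersection pattern and pigeonholing'' --- is in fact the correct route and is what F\"{u}redi actually does: one pigeonholes the family so that every member $F$ has the same isomorphism type of intersection structure $\{F \cap F' : F' \in \mathcal{F}\}$ relative to a common partition of its $r$ positions, and then a single round of sunflower-lemma applications ensures every realized intersection is the kernel of a $t$-star; the codegree dichotomy then falls out automatically (two vertices lying in a common intersection lie in the kernel of a $t$-star, hence have codegree at least $t$), with no iteration over class-pairs and no re-perturbation problem. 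Since you name this idea only as a fallback and do not execute it, the proposal as written does not yet constitute a proof; but given that the statement is a cited black box in the paper, the clean resolution is simply to invoke F\"{u}redi's Theorem~1$'$ together with the multiset reduction above.
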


The above is readily obtained from the original statement on uniform hypergraphs by considering $\mathcal{F} \coloneqq \{N(a):a \in A\}$, noting that either $G$ has $t$ vertices in $A$ with the same neighbourhood and thus a $K_{t,t}$-subgraph or $|\mathcal{F}|\geq |A|/t$. 

We will apply F\"{u}redi's Theorem to a bipartite graph with bipartition $(A,B)$ so that $A$ is $r$-regular and much larger than $|B|/c(r,t)$. If many pairs of the $(r,t)$-partition are not neighbourly, we will immediately find an induced subgraph of large average degree and girth at least $6$. Otherwise, we will find $A' \subseteq A$ and $B' \subseteq B$ so that $G[A', B']$ has large average degree and $A'$ is contained in the neighbourhood of a vertex in $B \setminus B'$. We repeat this process, and if the latter outcome occurs too many times, then we find a $K_{t,t}$-subgraph. 

In order to apply this theorem, we will find an induced subgraph where one side is regular and much larger than the other in Lemma~\ref{lem:main}, where we will use the following lemma to get most of the way there. 

\begin{lemma}[K\"{u}hn and Osthus~{\cite[Lemma~10]{KO04induced}}]
\label{lem:KO}
For all integers $r \geq 1$ and $d \geq 16$, every bipartite graph of average degree at least $8\left(4d \right)^{12r+1}$ has an induced subgraph with bipartition $(A,B)$ so that $d \leq \delta(A)\leq \Delta(A) \leq 16d$ and $|A| \geq d^{12r}|B|$.
\end{lemma}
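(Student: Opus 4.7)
The plan is to clean the graph to have high minimum degree, then restrict to a single dyadic degree class on one side and randomly subsample the other side so that Chernoff concentration delivers the window $[d,16d]$. Set $D := 8(4d)^{12r+1}$. First I would iteratively delete vertices of degree less than $D/4$; by the standard edge-accounting argument, the resulting induced subgraph $G_1$ with bipartition $(X_1,Y_1)$ is non-empty, has minimum degree at least $D/4$, and retains at least half of the original edges. Swapping the labels $X_1,Y_1$ if necessary, I may assume $|Y_1|\ge |X_1|$; I aim for $A\subseteq Y_1$ and $B\subseteq X_1$.

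Next I would bucket $Y_1$ dyadically by degree: for each $i$ with $D/4\le 2^i\le |X_1|$, set $Y^{(i)}:=\{y\in Y_1:2^i\le \deg(y)<2^{i+1}\}$. By pigeonhole there is a heavy bucket $Y^*$ with corresponding target $d^*=2^{i^*}$ such that $|Y^*|\,d^*\gtrsim |E(G_1)|/\log(4|X_1|/D)$. Then include each vertex of $X_1$ in $B$ independently with probability $p:=4d/d^*$. For every $y\in Y^*$, $\deg_B(y)$ is binomial with mean in $[4d,8d)$; because $d\ge 16$, Chernoff's inequality gives $\Pr[\,d\le \deg_B(y)\le 16d\,]\ge 1-2e^{-d}$. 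Setting $A:=\{y\in Y^* : d\le \deg_B(y)\le 16d\}$, the probabilistic method combined with standard concentration of $|B|\sim \operatorname{Bin}(|X_1|,p)$ produces an outcome with $|A|$ of order $|Y^*|$ and $|B|\le 2p|X_1|$.

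A short calculation gives $|A|/|B|\gtrsim |Y^*|\,d^*/(d\,|X_1|)$, and plugging in the heavy-bucket bound together with $|Y_1|\ge |X_1|$ yields $|A|/|B|\gtrsim D/(d\log(4|X_1|/D))$. The generous factor $(4d)^{12r+1}$ in $D$ is meant to absorb both the constants and the logarithm, so this ratio exceeds $d^{12r}$ as required.

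The main obstacle is precisely the $\log|X_1|$ factor introduced by pigeonhole: since the stated bound $8(4d)^{12r+1}$ carries no explicit dependence on the graph's size, one needs either to truncate the very highest-degree vertices of $Y_1$ so that the relevant dyadic range spans only a constant number of buckets (at the cost of a careful edge-loss analysis), or to replace the single-bucket sampling with a scheme that averages over bucket choices. The core of the argument, however, is the Chernoff concentration in the window $[d,16d]$---which is exactly where the hypothesis $d\ge 16$ enters---combined with pigeonhole over degree classes, and I would expect the final proof to differ from this sketch only in how that logarithmic loss is avoided.
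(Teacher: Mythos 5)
This lemma is not proved in the paper at all---it is quoted as a black box from K\"{u}hn and Osthus \cite[Lemma~10]{KO04induced}---so there is no in-paper argument to compare against; your sketch has to stand on its own, and as written it does not. The gap is exactly the one you flag yourself: the dyadic pigeonhole over the degree range $[D/4,|X_1|]$ costs a factor of $\log_2(4|X_1|/D)$, and the hypothesis $8(4d)^{12r+1}$ on the average degree is independent of the order of the graph. Your final ratio $|A|/|B|\gtrsim D/(d\log_2(4|X_1|/D))$ exceeds $d^{12r}$ only when $|X_1|\lesssim D\cdot 2^{c\cdot 4^{12r}}$, so the argument proves the lemma only for graphs whose order is bounded in terms of $d$ and $r$; for larger graphs it proves nothing. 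This is not a constant-chasing issue but a missing idea, and neither of your proposed repairs closes it: truncating the highest-degree vertices of $Y_1$ is not obviously safe, since after the clean-up step nothing prevents most of the edges of $G_1$ from being incident to a relatively small set of very-high-degree vertices of $Y_1$, and ``averaging over bucket choices'' still distributes the edge mass over $\log_2(4|X_1|/D)$ choices of sampling probability, so the same loss reappears. (The steps you do carry out---the min-degree clean-up, the choice of $p=4d/d^*$, and the Chernoff bound placing sampled degrees in $[d,16d]$---are fine.)

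A structural hint that the intended proof is different: the hypothesis has the form $8(4d)^{12r+1}$, i.e.\ a factor of $4d$ is spent for each power of $d$ in the target ratio $d^{12r}$. That multiplicative bookkeeping is characteristic of an iterative argument with roughly $12r$ rounds, in each of which one either finds the desired nearly-regular pair or passes to an induced subgraph that improves the side ratio by a factor of $d$ at the cost of a constant factor in degree---rather than a single pigeonhole-plus-sampling step. If you want to complete a proof along your lines, you would need to replace the global dyadic decomposition with such a round-by-round scheme (or otherwise confine the relevant degree range to a bounded number of scales before sampling); as it stands, the proposal establishes only a size-restricted version of the lemma.
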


\section{The girth 6 case}
\label{sec:4-cycles}
In this section we prove Theorem~\ref{thm:main}. The proof of Dellamonica, Koubek, Martin, and R\"{o}dl~\cite{DKMR11} for (non-induced) subgraphs actually works modulo the following lemma, but we re-create their full proof since it is not stated for induced subgraphs.

\begin{lemma}
\label{lem:main}
There is a function $D_{\ref{lem:main}}:\N^2 \rightarrow \N$ so that for all $r,\lambda \in \N$, every bipartite graph of average degree at least $D_{\ref{lem:main}}(r, \lambda)$ has an induced bipartite subgraph with bipartition $(A,B)$ so that $A$ is $r$-regular and $|A| \geq \lambda|B|$.
\end{lemma}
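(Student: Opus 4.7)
The plan is to combine Lemma~\ref{lem:KO} with a pigeonhole step and a random-sampling argument. Lemma~\ref{lem:KO} brings the degrees on the $A$-side into a narrow band $[d,16d]$ together with a strong imbalance $|A| \gg |B|$; the plan is then to sharpen ``almost regular'' to ``exactly $r$-regular'' while preserving a large ratio $|A|/|B|$.

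First, I would invoke Lemma~\ref{lem:KO} with its parameter equal to $1$ and with $d$ chosen sufficiently large in terms of $r$ and $\lambda$, so that an average degree bound of $8(4d)^{13}$ suffices. This produces an induced subgraph with bipartition $(A,B)$ such that every vertex of $A$ has degree in $[d,16d]$ and $|A| \geq d^{12}|B|$. Next, by pigeonhole over the at most $15d+1$ possible $A$-degree values, some integer $r' \in [d,16d]$ is the exact degree of at least $|A|/(15d+1) \geq d^{11}|B|/16$ vertices of $A$; let $A_1$ be this set, so that $A_1$ is $r'$-regular in $G[A_1,B]$.

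The main remaining step is to reduce the regularity from $r'$ (possibly as large as $16d$) down to exactly $r$. For this I would sample $B_2 \subseteq B$ by including each vertex independently with probability $p = r/r'$, and set $A_2 := \{\,v \in A_1 : |N(v) \cap B_2| = r\,\}$. Then $A_2$ is $r$-regular in $G[A_2,B_2]$ by construction, and by linearity of expectation
\[
\mathbb{E}\bigl[\,|A_2| - \lambda|B_2|\,\bigr] \;=\; \binom{r'}{r} p^r (1-p)^{r'-r}\,|A_1| \;-\; \lambda p\, |B|.
\]
A short Poisson-limit-style calculation shows that the binomial point-mass $\binom{r'}{r}p^r(1-p)^{r'-r}$ is bounded below by a positive constant $c(r)$ depending only on $r$ whenever $r' \geq 2r$, which is ensured by requiring $d \geq 2r$. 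Hence the expectation above is at least $c(r)|A_1| - \lambda|B|$, and this is positive once $d$ is large enough that $c(r)\,d^{11}/16 > \lambda$. Fixing any realization attaining at least the expectation yields $|A_2| > \lambda|B_2|$, so $G[A_2,B_2]$ is the desired induced subgraph.

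The principal obstacle is precisely this last reduction step: Lemma~\ref{lem:KO} delivers only a band of degrees, not a fixed regular degree, and naive vertex deletion on the $B$-side either fails to equalize degrees or destroys the large ratio $|A|/|B|$. The random-sampling argument succeeds because the binomial distribution retains a constant fraction of mass at its mean, so passing from $r'$-regular to exactly $r$-regular costs only a multiplicative factor $c(r)$ depending on $r$, which can be absorbed by choosing $d$ (and hence the required average degree) large enough at the outset.
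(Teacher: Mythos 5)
Your proof is correct, and it follows the same overall framework as the paper's (Lemma~\ref{lem:KO} followed by random sparsification of the $B$-side), but the mechanism for forcing ``exactly $r$ neighbours'' is genuinely different. The paper keeps the full degree band $[d,16d]$, randomly partitions $B$ into $r$ parts so that most of $A$ meets every part, and then samples $B$ at rate $1/16d$ so that each vertex of $A_1$ has exactly one surviving neighbour in each part with probability at least $(1/16ed)^r$; this loses a factor $(16ed)^r$ and therefore requires invoking Lemma~\ref{lem:KO} with parameter $r$, i.e.\ the ratio $|A|\geq d^{12r}|B|$. You instead pigeonhole first to an exactly $r'$-regular subset (losing only a factor of about $16d$) and then sample at rate $r/r'$, using the fact that a $\mathrm{Bin}(r',r/r')$ variable puts mass at least some $c(r)>0$ on the value $r$ once $r'\geq 2r$; this loses only a factor $c(r)^{-1}$ independent of $d$, so Lemma~\ref{lem:KO} with parameter $1$ and the ratio $d^{12}$ suffice. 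Your version is thus slightly more economical in the exponent, at the cost of the (routine but unwritten) verification of the lower bound $\binom{r'}{r}(r/r')^r(1-r/r')^{r'-r}\geq c(r)$; e.g.\ $\binom{r'}{r}\geq (r'/2)^r/r!$ and $(1-r/r')^{r'}\geq e^{-2r}$ for $r'\geq 2r$ give $c(r)=r^r/(2^r r!\,e^{2r})$. One small point worth making explicit: a realization with $|A_2|-\lambda|B_2|>0$ forces $A_2\neq\emptyset$, which is needed for $A_2$ to be $r$-regular under the paper's convention, and then $|B_2|\geq r\geq 1$ automatically.
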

\begin{proof}
Set \begin{align*}
d \coloneqq \max\left( \left( \left(2\lambda\right)^{1/r} 16e\right)^{1/11}, 2r^2 \right)
\end{align*}
and define $D_{\ref{lem:main}}(r, \lambda) \coloneqq 8\left(4d \right)^{12r+1}$. By Lemma~\ref{lem:KO}, it suffices to prove that the above holds for any graph $G$ with bipartition $(A,B)$ so that $d \leq \delta(A) \leq \Delta(A) \leq 16d$ and $|A| \geq d^{12r}|B|$.

First, we claim that there is a partition $B_1, \ldots, B_r$ of $B$ into $r$ parts so that at least half of the vertices in $A$ have a neighbour in each of $B_1, \ldots, B_r$. Assign each vertex of $B$ to one of the parts $B_1, \ldots, B_r$ independently uniformly at random. Then the probability that a particular vertex in $A$ has no neighbour in $B_1$ is at most $(1-\frac{1}{r})^{2r^2}\leq e^{-2r}\leq 1/2r$. So, by the union bound, the expectation for the number of vertices in $A$ with a neighbour in each of $B_1, \ldots, B_r$ is at least $|A|/2$, and the desired partition exists. Let $A_1$ denote the set of vertices in $A$ with at least one neighbour in each of $B_1, \ldots, B_r$.

Now, form a subset $B'$ of $B$ by including each vertex independently at random with probability $p\coloneqq 1/16d$. Let $A_2$ denote the set of all vertices in $A_1$ with exactly $r$ neighbours in $B'$. We will show that, with positive probability, the graph $G[A_2, B']$ satisfies the conditions of the lemma. The probability that a vertex $a \in A_1$ has exactly one neighbour in $B_1 \cap B'$ is at least $p(1-1/16d)^{16d-1}\geq p/e$. So the probability that $a$ has exactly one neighbour in each of $B_1\cap B', \ldots, B_r\cap B'$ is at least $p^r/e^r$. Thus
\begin{align*}
    \mathbb{E}|A_2| \geq \frac{|A_1|}{\left(16ed \right)^r} \geq \frac{|A|}{2\left(16ed \right)^r} \geq \frac{d^{12r}|B|}{2\left(16ed \right)^r} \geq \lambda|B|>0,
\end{align*}
and the desired induced subgraph exists.
\end{proof}

Now we prove the main proposition, which is stated as Proposition~10 in~\cite{DKMR11}. Theorem~\ref{thm:main} follows quickly afterwards.

\begin{proposition}
\label{prop:g6}
There are functions $R, \Lambda: \N^2 \rightarrow \N$ so that for all $r,\lambda \in \N$ and every bipartite graph $G$ with bipartition $(A,B)$ so that $A$ is $R(r, \lambda)$-regular and $|A| \geq \Lambda(r, \lambda)|B|$, either\begin{itemize}
    \item[(i)] there exists an induced subgraph of $G$ of average degree at least $r$ and girth at least $6$, or
    \item[(ii)] there exist $A' \subseteq A$ and $B' \subseteq B$ so that $A'$ is $r$-regular in $G[A', B']$, $|A'|\geq \lambda|B'|$, and $A'$ is contained in the neighbourhood of a vertex in $B \setminus B'$.
\end{itemize}
\end{proposition}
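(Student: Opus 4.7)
The plan is to apply F\"{u}redi's Theorem (Theorem~\ref{thm:Furedi}) with $t = \max(\lambda r + 1,\, D_{\ref{lem:main}}(r, \lambda))$, and to choose $R = R(r, \lambda)$ large enough that $R \geq t$ and $(R-1)/(r-1) \geq D_{\ref{lem:main}}(r, \lambda)$, together with $\Lambda = \Lambda(r, \lambda)$ sufficiently large in terms of $c(R, t)$ and $D_{\ref{lem:main}}(r, \lambda)$. If F\"{u}redi returns a $K_{t,t}$-subgraph with parts $\{a_1, \dots, a_t\} \subseteq A$ and $\{b_1, \dots, b_t\} \subseteq B$, then since $t \geq \lambda r + 1$ I set $B' = \{b_1, \dots, b_r\}$, $v = b_{r+1}$, and $A' = \{a_1, \dots, a_{\lambda r}\}$: each $a_i \in A'$ is adjacent to every vertex of $B'$ and to $v$, so outcome (ii) holds.

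Otherwise F\"{u}redi yields $A_1 \subseteq A$ with $|A_1| \geq c(R,t)|A| \geq |B|$ such that $(A_1, B)$ is $(R,t)$-partitionable via some partition $B_1, \dots, B_R$ of $B$. Since $A_1$ is $R$-regular with at least one neighbour in each of the $R$ parts, equality in degree forces exactly one neighbour $\phi(a, k) \in B_k$ per part, defining a map $\phi \colon A_1 \to B_1 \times \dots \times B_R$. I call a pair $(k, l)$ \emph{fat} if some fibre of $\phi|_{\{k, l\}}$ has size at least $2$ (equivalently, by the $(R, t)$-partition property, every non-empty fibre has size at least $t$) and \emph{simple} otherwise, and let $H$ be the graph on $\{1, \dots, R\}$ whose edges are the fat pairs. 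If $\alpha(H) \geq r$, pick an independent set $I$ of size $r$ in $H$ and consider $G[A_1, \bigcup_{k \in I} B_k]$: every $a \in A_1$ has exactly $r$ neighbours there, and any $C_4$ would force two distinct $a, a' \in A_1$ to share neighbours in two parts $B_k, B_l$ with $k, l \in I$, which is impossible if $k = l$ (since $|N(a) \cap B_k| = 1$) and impossible if $k \neq l$ (since $(k, l)$ is simple, so $\phi|_{\{k, l\}}$ is injective). Therefore this subgraph has girth at least $6$, and by $|A_1| \geq |B|$ its average degree is at least $r$; this is outcome (i).

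The main obstacle is the remaining case $\alpha(H) < r$, which forces $r \geq 2$. Tur\'{a}n's theorem provides some $i^* \in \{1, \dots, R\}$ of $H$-degree $f \geq (R-1)/(r-1) \geq D_{\ref{lem:main}}(r, \lambda)$, and averaging yields $v \in B_{i^*}$ with $|A_v| := |N(v) \cap A_1| \geq |A_1|/|B_{i^*}| \geq c(R, t)\Lambda$. Writing $F_{i^*} = \{j : (i^*, j) \text{ is fat}\}$, I apply Lemma~\ref{lem:main} to $G[A_v, B^{**}]$, where $B^{**} = \bigcup_{j \in F_{i^*}} \phi(A_v, j)$. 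This subgraph is $f$-regular on the $A_v$ side (contributing one $\phi(a, j) \in B_j$ per $j \in F_{i^*}$), and the crucial bound $|\phi(A_v, j)| \leq |A_v|/t$ holds for every $j \in F_{i^*}$, because the preimages in $A_v$ of the points of $\phi(A_v, j)$ are exactly the non-empty fibres of $\phi|_{\{i^*, j\}}$ with first coordinate $v$, each of size at least $t$ by fatness. Hence $|B^{**}| \leq f|A_v|/t$, so the average degree of $G[A_v, B^{**}]$ is at least $2ft/(t + f) \geq \min(f, t) \geq D_{\ref{lem:main}}(r, \lambda)$, and Lemma~\ref{lem:main} produces $A' \subseteq A_v$ and $B' \subseteq B^{**}$ with $A'$ $r$-regular in $G[A', B']$ and $|A'| \geq \lambda|B'|$. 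Since $A' \subseteq A_v \subseteq N(v)$ and $v \in B_{i^*}$ lies outside $B^{**} \supseteq B'$, this is outcome (ii). The hard part is arranging $B^{**}$ so that the shrinkage $|\phi(A_v, j)| \leq |A_v|/t$ absorbs the $f$-fold blow-up and keeps the average degree above $D_{\ref{lem:main}}(r, \lambda)$, allowing the closing appeal to Lemma~\ref{lem:main}.
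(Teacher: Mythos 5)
Your first two cases are sound and essentially match the paper: the $K_{t,t}$ outcome feeds directly into (ii), and the independent-set case gives (i) exactly as in the paper's proof (your ``fat'' pairs are the paper's ``neighbourly'' pairs, and your $C_4$-freeness argument is correct). The parameter bookkeeping with Tur\'an's theorem is slightly off ($\alpha(H)<r$ gives a vertex of $H$-degree at least $R/(r-1)-1$, not $(R-1)/(r-1)$), but since $R$ is yours to choose this is harmless.

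The genuine gap is in your final case, in the sentence ``Lemma~\ref{lem:main} produces $A' \subseteq A_v$ and $B' \subseteq B^{**}$ with $A'$ $r$-regular.'' Lemma~\ref{lem:main} is a pure existence statement: it gives \emph{some} induced subgraph with \emph{some} bipartition $(A',B')$ in which $A'$ is $r$-regular and $|A'|\geq\lambda|B'|$, but it gives you no control over which side of $G[A_v,B^{**}]$ the regular, larger class $A'$ lies in. If the lemma returns a subgraph whose $r$-regular side sits inside $B^{**}$, then $A'\not\subseteq N(v)$ (indeed $A'\not\subseteq A$ at all) and outcome (ii) does not follow. Nothing in the statement or proof of Lemma~\ref{lem:main} (or of the K\"uhn--Osthus lemma behind it) lets you prescribe the side, so this step does not go through as written. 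The detour is also unnecessary: the paper's proof (and the natural repair of yours) takes only $r$ elements $J\subseteq F_{i^*}$ rather than all $f$ of them, so that $A_v$ is already exactly $r$-regular in $G[A_v,\bigcup_{j\in J}\phi(A_v,j)]$, and your fibre bound $|\phi(A_v,j)|\leq|A_v|/t$ then gives $|B'|\leq r|A_v|/t\leq|A_v|/\lambda$ directly (using $t\geq\lambda r+1$), yielding (ii) with no appeal to Lemma~\ref{lem:main} and no need for $f$ or $t$ to exceed $D_{\ref{lem:main}}(r,\lambda)$.
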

\begin{proof}
Define\begin{align*}
    t \coloneqq \lambda r+1, \hspace{1em} R \coloneqq R(r, \lambda) \coloneqq t(r+1), \hspace{1em} \textrm{and} \hspace{1em} \Lambda \coloneqq \Lambda(r, \lambda) \coloneqq \left\lceil \frac{t}{c(R,t)} \right\rceil.
\end{align*}
By Theorem~\ref{thm:Furedi} of F\"{u}redi, either $G$ has a $K_{t,t}$-subgraph or there exists $A_1 \subseteq A$ with at least $c(R,t)|A|$ vertices so that $(A_1,B)$ is $(R,t)$-partitionable in $G[A_1, B]$. If $G$ has a $K_{t,t}$-subgraph, then condition \textit{(ii)} holds. So we may assume that there is an $(R,t)$-partition $B_1, \ldots, B_R$ of $(A_1,B)$.

Now, let $H$ be the graph on vertex set $\{1,2,\ldots, R\}$ where $i$ is adjacent to $j$ if $B_i$ and $B_j$ are neighbourly (that is, if there exist a vertex in $B_i$ and a vertex in $B_j$ with at least two common neighbours). Since every graph with maximum degree $\Delta$ can be coloured with $\Delta+1$ colours, either $H$ has an independent set of size $t\geq r$ or $H$ has a vertex of degree at least $r$.

If $H$ has an independent set $I\subset \{1,2,\ldots, R\}$ of size $r$, then let $B' \coloneqq \bigcup_{i \in I}B_i$. The graph $G[A_1, B']$ has girth at least $6$, every vertex in $A_1$ has $r$ neighbours in $B'$, and $|A_1| \geq c(R,t)|A|\geq c(R,t)\Lambda|B| \geq t|B|\geq |B'|$. So the average degree of $G[A_1, B']$ is at least $r$ and condition \textit{(i)} holds.

If $H$ has a vertex $k \in \{1,2,\ldots, R\}$ of degree at least $r$, then let $b \in B_k$ be any vertex with a neighbour in $A_1$ and let $J \subset \{1,2,\ldots, R\}\setminus \{k\}$ be a set of $r$ neighbours of $k$ in $H$. Let $A' \coloneqq N(b) \cap A_1$ and $B'\coloneqq \bigcup_{j \in J}N(A')\cap B_j$. Then $A'$ is $r$-regular in $G[A', B']$ and, because $B_k$ and $B_j$ are neighbourly for each $j \in J$, every vertex in $B'$ has at least $t$ neighbours in $A'$. Then, counting the edges in $G[A', B']$ in two different ways, $r|A'| \geq t|B'|$ and thus $|A'|\geq \lambda |B'|$ and condition \textit{(ii)} holds. This completes the proof.
\end{proof}

Finally we prove Theorem~\ref{thm:main}, which is restated here for convenience.

\main*

\begin{proof}
We actually prove that there are functions $R_1, \Lambda_1: \N^2 \rightarrow \N$ so that for all $s,t \in \N$, every bipartite graph with bipartition $(A,B)$ so that $A$ is $R_1(s,t)$-regular and $|A| \geq \Lambda_1(s,t)|B|$ has either an induced subgraph of average degree at least $t$ and girth at least $6$ or a $K_{s,t}$-subgraph where the side with $s$ vertices is contained in $A$. This implies the theorem by Lemma~\ref{lem:main}. 

The statement is clearly true when $s=1$, where we set $R_1(1,t)\coloneqq t$ and $\Lambda_1(1,t)\coloneqq 1$. If $s>1$, inductively define $R_1(s,t) \coloneqq R(R_1(s-1,t), \Lambda_1(s-1,t))$ and $\Lambda_1(s,t) \coloneqq \Lambda(R_1(s-1,t), \Lambda_1(s-1,t))$. Then the statement holds by Proposition~\ref{prop:g6}, noting that we always have $R_1(s,t) \geq t$.
\end{proof}

\section{The approximately regular case}
\label{sec:regular}

Now we turn to Theorem~\ref{thm:reg} on induced subgraphs of bipartite graphs which are almost regular. For $d \in \mathbb{R}^+$, $\lambda \in \N$, we say that a graph is \textit{$(d, \lambda)$-regular} if it has average degree $d$ and maximum degree at most $\lambda d$.

The standard proof that every $(d, \lambda)$-regular bipartite graph has a subgraph with large average degree and girth, for $d$ sufficiently large, goes as follows. First observe that, where $n$ is the number of vertices, such a graph has at most $n(\lambda d)^{2k-1}$ cycles of length $2k$; each edge is the first edge of at most $(\lambda d)^{2k-2}$ paths with $2k-1$ edges, and there is at most one choice for the final edge of the cycle. Now, choose each edge independently at random with probability $p$, for suitably chosen $p$, and delete one edge from each short cycle. Because there are not too many short cycles, with high probability the resulting graph has large average degree.

We will see that the analogous procedure works for induced subgraphs when a complete bipartite graph is excluded, using the following well-known theorem.

\begin{theorem}[K\"{o}vari-S\'{o}s-Tur\'{a}n~\cite{KST}]
\label{thm:KST}
For each fixed $t \in \N$, every bipartite graph with $n$ vertices on each side and no $K_{t,t}$-subgraph has $\mathcal{O}(n^{2-1/t})$ edges.
\end{theorem}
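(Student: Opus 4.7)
The plan is to prove Theorem~\ref{thm:KST} via the classical double-counting argument on $K_{1,t}$-stars. Let $G$ be a bipartite graph with parts $X,Y$ of size $n$ each and no $K_{t,t}$-subgraph, and write $e$ for the number of edges of $G$. The idea is to count in two different ways the number of ordered pairs $(y, S)$ where $y \in Y$, $S \subseteq X$, $|S| = t$, and $S \subseteq N(y)$.

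On one hand, summing over $y \in Y$ gives exactly $\sum_{y \in Y} \binom{d(y)}{t}$ such pairs. On the other hand, because $G$ contains no $K_{t,t}$-subgraph, every fixed set $S \subseteq X$ with $|S| = t$ lies in the common neighbourhood of at most $t-1$ vertices of $Y$---otherwise those $t$ witnesses together with $S$ would form a $K_{t,t}$. Hence the total count is at most $(t-1)\binom{n}{t}$, giving
\[
    \sum_{y \in Y} \binom{d(y)}{t} \;\leq\; (t-1)\binom{n}{t}.
\]
The next step is to extract a bound on $e$ from this inequality using convexity: since $x \mapsto \binom{x}{t}$ is convex on $x \geq t-1$, Jensen's inequality applied to the degree sequence $\{d(y) : y \in Y\}$ gives $\sum_{y \in Y}\binom{d(y)}{t} \geq n\binom{e/n}{t}$, where $e/n$ is the average degree on the $Y$-side. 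Combining with the elementary bounds $\binom{x}{t} \geq (x-t+1)^t/t!$ and $\binom{n}{t} \leq n^t/t!$, and rearranging, one obtains $(e/n - t + 1)^t \leq (t-1)\, n^{t-1}$, and hence $e \leq (t-1)^{1/t}\, n^{2 - 1/t} + (t-1)n = O(n^{2-1/t})$ as required.

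There is no serious obstacle here---the argument is completely elementary once one notices that $K_{t,t}$-freeness is most naturally phrased as a bound on the multiplicity of $t$-subsets of one side appearing as subsets of common neighbourhoods. The only minor point to handle is that the Jensen lower bound is vacuous when the average degree $e/n < t-1$, but in that regime $e = O(n)$, which is already $O(n^{2-1/t})$ since $t$ is fixed, so this case is absorbed into the implicit constant in the $O$-notation.
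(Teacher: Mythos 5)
The paper does not give a proof of Theorem~\ref{thm:KST}; it simply cites the K\"{o}vari--S\'{o}s--Tur\'{a}n theorem~\cite{KST} as a known classical result. Your argument is the standard double-counting proof and is correct: counting $(y,S)$ pairs with $S$ a $t$-subset of $N(y)$, using $K_{t,t}$-freeness to cap the multiplicity of each $S$ at $t-1$, applying Jensen to the convex function $x\mapsto\binom{x}{t}$, and rearranging to get $e\leq(t-1)^{1/t}n^{2-1/t}+(t-1)n$ all check out, and you correctly dispose of the degenerate case $e/n<t-1$.
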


This yields the following improved bound on the number of short cycles. We could do better than having $k$ and $\lambda$ be constant, but we write things this way in order to keep the statements simple.

\begin{corollary}
\label{cor:numCycles}
For any fixed $t,k, \lambda \in \N$, every $(d, \lambda)$-regular, $n$-vertex bipartite graph with no $K_{t,t}$-subgraph has $\mathcal{O}(nd^{2k-1-1/t})$ cycles of length at most $2k$.
\end{corollary}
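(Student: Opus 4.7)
The plan is to bound the number of $C_{2\ell}$-subgraphs for each $\ell \in \{2, \ldots, k\}$ separately and sum. Since $\lambda$ and $k$ are fixed constants, and $d^{2\ell-1} \le d^{2k-1-1/t}$ for every $\ell < k$ (using $t \ge 1$), cycles of length strictly less than $2k$ together contribute at most $O(nd^{2k-1-1/t})$. The main task is therefore to bound the number of $C_{2k}$-subgraphs by $O(nd^{2k-1-1/t})$.

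I would follow the standard cycle-counting argument (outlined just before Theorem~\ref{thm:KST}) and sharpen it using Theorem~\ref{thm:KST}. That argument counts directed closed walks $v_0 v_1 \cdots v_{2k-1} v_0$ of length $2k$ by picking an initial directed edge ($nd$ choices), extending greedily to a directed path of length $2k - 1$ (at most $\lambda d$ choices per step, yielding $(\lambda d)^{2k-2}$), and closing by a final edge (at most one choice). This yields $O(n(\lambda d)^{2k-1})$, and the goal is to save a factor of $d^{1/t}$ using the $K_{t,t}$-free assumption. The key observation is that when closing the cycle, the vertex $v_{2k-1}$ must lie in $N(v_0) \cap N(v_{2k-2})$, where $v_0$ and $v_{2k-2}$ lie on the same side of the bipartition. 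Writing $c(u, u') := |N(u) \cap N(u')|$, the count of directed $2k$-cycles is thus at most $\sum c(v_0, v_{2k-2}) \cdot W_{2k-2}(v_0, v_{2k-2})$, where the sum is over same-side pairs and $W_r(u, u')$ denotes the number of walks of length $r$ from $u$ to $u'$. By Theorem~\ref{thm:KST} applied on each side, $\sum_b d_b^t = O(n^t)$, and a double-counting argument then yields $\sum_{u, u'} c(u, u')^t = O(n^t)$; it is this inequality, combined with H\"older's inequality applied to the cycle sum, that is intended to produce the saving of $d^{1/t}$.

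The main technical obstacle will be executing this H\"older-type step so that the saving is exactly of the right order. A naive application with conjugate exponents $t$ and $t/(t-1)$ yields only $\sum c \cdot W \lesssim n^{2-1/t}(\lambda d)^{2k-2}$, which is in general weaker than the target $O(nd^{2k-1-1/t})$; closing the gap requires either exploiting the $K_{t,t}$-free structure on the walks $W_{2k-2}$ themselves (for example by applying Theorem~\ref{thm:KST} at an intermediate step of the path extension as well) or by a more refined combinatorial argument interleaving common-neighbor counts with path extensions. Once the improved bound $O(nd^{2k-1-1/t})$ for $C_{2k}$ is obtained and constants (depending on $t, k, \lambda$) are absorbed, summing over $\ell \le k$ completes the proof.
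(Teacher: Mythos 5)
Your reduction to counting cycles of length exactly $2k$ is fine, and you have correctly located where the factor $d^{1/t}$ must come from (the closing step of the cycle, via Theorem~\ref{thm:KST}). But the proof is not complete: the H\"older step that is supposed to deliver the saving is explicitly left unexecuted, and as set up it does not work. The difficulty is not merely technical. Stopping the walk at $v_{2k-2}$ leaves you with $\sum_{u,u'} c(u,u')\,W_{2k-2}(u,u')$, and the only global control you have on $c$ is $\sum_{u,u'} c(u,u')^{t}=\mathcal{O}(n^{t})$. This permits roughly $(n/d)^{t}$ pairs with $c(u,u')$ as large as $\lambda d$, and nothing in your setup prevents the walks $W_{2k-2}$ from concentrating on exactly those pairs; in that scenario the sum can be as large as $(n/d)^{t}(\lambda d)^{2k-1}$, which exceeds $nd^{2k-1-1/t}$ once $n \gg d$. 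So pointwise/averaged information about common neighbourhoods is genuinely insufficient, and the missing step is the entire content of the corollary.

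The fix is to stop the walk one step earlier, which is what the paper does. Each edge extends to at most $(\lambda d)^{2k-4}$ paths $P$ on $2k-3$ edges; since $2k-3$ is odd, the ends $x$ and $y$ of $P$ lie on opposite sides, and completing $P$ to a $2k$-cycle requires choosing an edge of $G$ between $N(x)$ and $N(y)$. These are two sets of size at most $\lambda d$ on opposite sides of the bipartition, and the bipartite graph between them is $K_{t,t}$-free, so Theorem~\ref{thm:KST} bounds the number of completions by $\mathcal{O}\bigl((\lambda d)^{2-1/t}\bigr)$ \emph{for every individual path}. No H\"older inequality or averaging over pairs is needed: each edge lies in $\mathcal{O}(d^{2k-2-1/t})$ cycles of length $2k$, and summing over the $nd/2$ edges gives the claim. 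The lesson is that you should arrange for Theorem~\ref{thm:KST} to count \emph{edges between two bounded-size neighbourhoods} rather than \emph{common neighbours of a pair}, since the former admits a uniform pointwise bound while the latter does not.
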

\begin{proof}
It suffices to show that each edge is in $\mathcal{O}(d^{2k-2-1/t})$ cycles of length exactly $2k$. Now, each edge is the first edge of at most $(\lambda d)^{2k-4}$ paths on $2k-3$ edges. For each such path $P$ with ends $x$ and $y$, the number of cycles of length $2k$ containing $P$ is at most the number of edges between the neighbourhood of $x$ and the neighbourhood of $y$. By Theorem~\ref{thm:KST}, there are at most $\mathcal{O}(d^{2-1/t})$ such edges, completing the proof.
\end{proof}

We are ready to prove Theorem~\ref{thm:reg}, which is restated here for convenience.

\reg*
\begin{proof}
Throughout the proof, we assume that $d$ is sufficiently large and $G$ is a $(d, \lambda)$-regular graph with no $K_{t,t}$-subgraph. We will find an induced subgraph of average degree at least $t$ and girth at least $2k$. For convenience, set $\epsilon \coloneqq 1/2kt$, $p \coloneqq 1/d^{1-\epsilon}$, and $n \coloneqq |V(G)|$.

Now, let $H$ be a random induced subgraph of $G$ obtained by including each vertex independently at random with probability $p$, and let $H_1$ be the graph obtained from $H$ by removing one vertex from each cycle of length at most $2k$.  Let $X_1$ be the number of cycles of $H$ of length at most $2k$, and let $X_2$ be the number of tuples $(e,C)$ so that $e$ is an edge of $H$, $C$ is a cycle of $H$ of length at most $2k$, and $e$ is incident to a vertex of $C$ but is not contained in $C$.  So $|E(H)|-|E(H_1)| \leq 2X_1+X_2$. First we bound the expectation of $2X_1+X_2$. Writing $\ell$ for the number of cycles of $G$ of length at most $2k$ and using the fact that $p^{2k} \leq dp^{2k+1}$,\begin{align*}
    \mathbb{E} [2X_1+X_2] \leq \ell\left(2p^{2k}+ 2k\lambda d p^{2k+1}\right) \leq \ell \left(4k\lambda d p^{2k+1} \right).
\end{align*}

By Corollary~\ref{cor:numCycles}, for some $c \in \N$ depending only on $t$, $k$, and $\lambda$, \begin{align*}
    \mathbb{E} [2X_1+X_2] \leq  cnd^{2k-1-1/t}\left( d p^{2k+1}\right)= cpn\left(d^{2k-1/t}p^{2k}\right)= cpn.
\end{align*} Now, $\mathbb{E} |E(H)| = p^2 dn/2 = d^\epsilon pn/2$ and $\mathbb{E}|V(H)|=pn$. So \begin{align*}
    \mathbb{E}[|E(H)|-d^\epsilon|V(H)|/4-2X_1-X_2] & \geq d^\epsilon pn/4-cpn \geq 0
\end{align*}for $d$ sufficiently large. Then $G$ has an induced subgraph of girth at least $2k$ and average degree at least $t$ when $d$ is sufficiently large.
\end{proof}

\section{Conclusion}
\label{sec:conc}
With the results presented here, there seems to be almost as much evidence for the induced subgraph version of Thomassen's Conjecture as for the original conjecture. One notable exception though is that Dellamonica and R\"{o}dl~\cite{DR11} proved Thomassen's conjecture for graphs with maximum degree at most doubly exponential in a small enough power of their average degree. With Koubek and Martin~\cite[Theorem~17]{DKMR11}, they also showed that for every $\Delta \in \N$, there are graphs with arbitrarily large average degree and no subgraph with minimum degree at least $4$ and maximum degree at most $\Delta$. So it is not possible to reduce Thomassen's Conjecture to graphs with maximum degree bounded by a function of their average degree.

The construction is random and based on a similar construction by Pyber, R\"{o}dl, and Szemer\'{e}di~\cite{PRS95}. These graphs tend to have a particular structure which seems interesting to study further. Let us say that a bipartite graph with bipartition $(A,B)$ is \textit{$r$-neighbourhood regular} if there is a partition $B_1, \ldots, B_r$ of $B$ so that every vertex in $A$ has exactly one neighbour in each of $B_1, \ldots, B_r$, and, for each $i$, all vertices in $B_i$ have the same degree $d_i \geq r$. 

It would be interesting to know if Thomassen's Conjecture holds for such neighbourhood-regular graphs, and if (in some approximate sense) such graphs can always be found as subgraphs. Thomassen's Conjecture would appear to be difficult for neighbourhood-regular graphs when $d_1 << d_2 << \ldots << d_r$, yet random constructions have few short cycles when $A$ is large. So settling this case seems like an interesting weakening of the conjecture.

\section*{Acknowledgement}
We would like to thank Jacques Verstraete for suggesting Corollary~\ref{cor:numCycles}, which greatly simplified an earlier version of this paper.

\bibliographystyle{amsplain}
\bibliography{InducedBip}

\end{document}